\newtheorem{thm}{Theorem}[section]
\newtheorem{cor}[thm]{Corollary}
\newtheorem{pro}[thm]{Proposition}
\newtheorem{deff}[thm]{Definition}
\newtheorem{lem}[thm]{Lemma}
\newtheorem{rem}[thm]{Remark}
\newcommand{\nc}{\newcommand}
\nc{\cc}{\D{C}} \nc{\hh}{\D{H}} \nc{\nn}{\D{N}} \nc{\oo}{\D{O}}
\nc{\qq}{\D{Q}}
 \nc{\rr}{\D{R}}
\nc{\zz}{\D{Z}} \nc{\livre}{\ast}
\nc{\barr}{\begin{array}} \nc{\earr}{\end{array}}
\nc{\bthm}{\begin{thm}} \nc{\ethm}{\end{thm}}
\nc{\bpro}{\begin{pro}} \nc{\epro}{\end{pro}}
\nc{\blem}{\begin{lem}} \nc{\elem}{\end{lem}}
\nc{\bins}{\begin{ins}} \nc{\eins}{\end{ins}}
\nc{\bcor}{\begin{cor}} \nc{\ecor}{\end{cor}}
\nc{\brem}{\begin{rem}} \nc{\erem}{\end{rem}}
\nc{\bdeff}{\begin{deff}} \nc{\edeff}{\end{deff}}
\nc{\bea}{\begin{eqnarray}} \nc{\eea}{\end{eqnarray}}
\nc{\D}[1]{{\mathbb#1}}
\def\R{\rm I\kern -.2em R}
\def\N{\rm I\kern -.18em N}
\def\Z{\rm Z\kern -.332em Z}
\def\de{\rm [\kern -.15em [}
\def\dd{\rm ]\kern -.15em ]}
\def\||{\hspace{0.15cm}|\hspace{0.15cm}}
\title{Bianchi groups are conjugacy separable}
\author{S. C. Chagas,
 \,\,\, P. A. Zalesskii
 \footnote {Both authors were supported by
CNPq.}}
\begin{document}
\maketitle

\begin{abstract}
We prove that non-uniform arithmetic lattices of
$SL_2(\mathbb{C})$ and consequently the Bianchi groups are
conjugacy separable. The proof is based on recent deep results of
Agol, Long, Reid and Minasyan. The conjugacy separability of
groups commensurable with Limit groups is also established.

\end{abstract}

\section{Introduction}
 The Bianchi groups are defined as $PSL_2(O_d)$, where
$O_d$ denotes the ring of integers of the field
$\mathbb{Q}(\sqrt{-d})$ for each square-free positive integer $d$.
These groups are classical objects investigated  first time in
1872 by Luigi Bianchi. The Bianchi groups have long been of
interest, not only because of their intrinsic interest as abstract
groups, but also because they arise naturally in number theory and
geometry.  They are   discrete subgroups of
$PSL_2(\mathbb{C})\cong Isom^{+}(\mathbb{H}_3)$, and the quotient
$\mathbb{H}_3$ modulo $PSL_2(O_d)$ is a finite volume hyperbolic
$3$-orbifold. We refer to \cite{EGM} and \cite{Fine} for further
information about Bianchi groups.

The recent big advance in the study of Bianchi groups was the
proof that they are subgroup separable (see Theorem 3.4 in
\cite{lr}). Another important residual property is conjugacy
separability on which we concentrate in this paper.

A group $G$ is conjugacy separable if whenever $x$ and $y$ are
non-conjugate elements of $G$, there exists some finite
quotient of $G$ in which the images of $x$ and $y$ are
non-conjugate. The notion of the conjugacy separability owes its importance
to the fact, first pointed out by Mal'cev \cite{M}, that the conjugacy problem has
a positive solution in finitely presented conjugacy separable groups.

It follows from a recent work of Minasyan \cite{Ashot} combined with the work of Agol, Long and Reid \cite{alr}, \cite{lr}
that Bianchi groups are virtually conjugacy separable, i.e., contain a finite index subgroup that are conjugacy
separable\footnote{This important observation was made to us by Henry Wilton in private  communication.}.
This does not prove however the conjugacy separability of Bianchi groups, since (in contrast with subgroup
separability) conjugacy separability is not preserved by commensurability (see \cite{G, CZ2, MM}).
We say that two groups of $PSL_2(\mathbb{C})$ are commensurable if their intersection has finite index in both of them.
More generally, two groups are (abstractly) commensurable if they contain isomorphic subgroups of finite index. In the rest of the paper we shall always use commensurability in the sense of abstract commensurability (after all two notions are equivalent since abstractly commensurable groups are commensurable in some group).

In this paper we establish that (abstract) commensurability invariance of conjugacy separability
within torsion free groups very much depends on the centralizers of elements (see Theorem \ref{subida para extensao com cond central.}).
Using it and different methods for torsion elements
 we prove our main
\medskip
\begin{thm}\label{arithmetic manifolds}
Non-uniform arithmetic lattices of $SL_2(\mathbb{C})$ are conjugacy separable.
\end{thm}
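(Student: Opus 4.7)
The plan is to combine the virtual conjugacy separability of $\Gamma$ pointed out in the introduction with Theorem \ref{subida para extensao com cond central.}, and then to deal with torsion elements by a separate argument. Let $\Gamma$ be a non-uniform arithmetic lattice in $SL_2(\mathbb{C})$. By Selberg's lemma $\Gamma$ contains a torsion-free normal subgroup $N$ of finite index, geometrically the fundamental group of a finite-volume cusped hyperbolic $3$-manifold. Combining the theorem of Agol--Long--Reid cited in the introduction with Minasyan's result one finds, inside $N$, a further finite-index subgroup $H$ which is normal in $\Gamma$ and is conjugacy separable. The theorem will follow once we lift conjugacy separability first from $H$ to $N$, and then from $N$ to $\Gamma$.

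The first step is to apply Theorem \ref{subida para extensao com cond central.} to the torsion-free pair $H \leq N$. The centralizer hypothesis of that theorem has to be verified in this pair, but this is very clean: a non-trivial element of $N$ is either loxodromic, with infinite cyclic centralizer, or parabolic, with free abelian centralizer of rank $2$ sitting inside a peripheral subgroup. In either case the centralizers are finitely generated abelian groups, their residual properties are transparent, and their restriction to the finite-index subgroup $H$ is easy to describe. Thus the centralizer condition of Theorem \ref{subida para extensao com cond central.} is straightforward to check, and $N$ is conjugacy separable.

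The second step is the passage from $N$ to $\Gamma$, and here the argument splits. For torsion-free $x,y \in \Gamma$ one reapplies the centralizer machinery: the $\Gamma$-centralizer of a loxodromic or parabolic element differs from its $N$-centralizer only by a finite group of isometries stabilizing the corresponding axis or cusp, so Theorem \ref{subida para extensao com cond central.} applies once more. For torsion elements, which in $PSL_2(\mathbb{C})$ have order $2,3,4$ or $6$, one uses the ``different methods for torsion elements'' referred to in the introduction. Given non-conjugate torsion $x,y \in \Gamma$, either their images are non-conjugate in the finite quotient $\Gamma/N$, in which case they are separated there, or $x$ and $y$ lie in the same $N$-coset and one separates them using the conjugacy separability of $N$ applied inside the normalizer of the finite cyclic group $\langle x \rangle$, which is itself a virtually abelian or virtually Fuchsian arithmetic subgroup of $\Gamma$ and thus conjugacy separable by classical results.

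The main obstacle is the torsion case. A $\Gamma$-conjugacy class of torsion elements is a union of $N$-orbits permuted by the finite group $\Gamma/N$, so residual finiteness of $N$ alone cannot see the difference between two elements of the same $N$-coset that fail to be $\Gamma$-conjugate. The key input that makes the argument work is that for non-uniform arithmetic lattices of $SL_2(\mathbb{C})$ there are only finitely many $\Gamma$-conjugacy classes of finite subgroups, each with an arithmetically well-understood normalizer, which together with the already established conjugacy separability of $N$ permits a finite case analysis. Checking that all these normalizers behave well with respect to $\Gamma$ (in particular that their finite-index embeddings into $\Gamma$ satisfy the centralizer condition of Theorem \ref{subida para extensao com cond central.}) is where most of the work of the proof will lie.
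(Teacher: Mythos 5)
Your outline of the torsion-free part is broadly consistent with the paper's route (find a finite-index hereditarily conjugacy separable subgroup via Agol--Long--Reid and Minasyan, check that centralizers are finitely generated virtually abelian, and apply Proposition \ref{conjugacy distinguished}), but it contains two inaccuracies. First, Theorem \ref{subida para extensao com cond central.} cannot be ``reapplied'' to the pair $(N,\Gamma)$, since that theorem requires the ambient group to be torsion free and $\Gamma$ is not; the paper instead uses Proposition \ref{conjugacy distinguished} directly, which has no such restriction. Second, the condition $\overline{C_H(h)}=C_{\widehat H}(h)$ is not ``transparent'' from the centralizers being finitely generated abelian --- it is precisely Minasyan's criterion for \emph{hereditary} conjugacy separability of $H$, while the companion equality $\overline{C_H(h)}=\widehat{C_H(h)}$ requires subgroup separability of $H$ (Long--Reid). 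Both points are repairable, but they need to be stated as the actual hypotheses being verified.

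The genuine gap is the torsion case, which you correctly identify as the main obstacle but do not resolve. Your proposed mechanism --- separate non-conjugate torsion elements $x,y$ ``using the conjugacy separability of $N$ applied inside the normalizer of $\langle x\rangle$,'' together with finiteness of the number of conjugacy classes of finite subgroups --- does not produce what is required, namely a finite quotient of $\Gamma$ itself in which $x$ and $y$ are non-conjugate. A finite quotient of a normalizer separating them need not extend to one of $\Gamma$, and conjugacy separability of the finite-index subgroup $N$ plus finiteness of $\Gamma/N$ is exactly the combination the introduction warns is insufficient, since conjugacy separability is not a commensurability invariant. The paper's actual torsion argument (Theorem \ref{order p}) is of a different nature: it uses the fact that the lattice is virtually (free-by-cyclic) --- obtained from Agol's virtual fibering theorem and Bieri's theorem on groups of cohomological dimension $2$, an ingredient entirely absent from your proposal --- reduces to an element $a$ of prime order $p$ acting on a free group $F$, invokes the Dyer--Scott decomposition $V=\ast_{i}(F_i\times T_i)\ast L$ for the preimage $V$ of $\langle a_0\rangle$ in $G$, and then constructs an explicit finite quotient by analyzing the permutational $\mathbb{F}_p[N]$-module of torsion classes in $V/V'$, using subgroup separability to pass from the abstract quotient to the profinite closure. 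None of these steps is suggested by your sketch, so the torsion half of the theorem remains unproved in your proposal.
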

\medskip
The conjugacy separability of Bianchi groups was conjectured in
\cite{WZ} where the conjugacy separability of Euclidean Bianchi
groups was proved (more precisely, the cases $d=1,2,7, 11$ were
established there and $d=3$ was completed in \cite{LZ}). Recall that non-uniform arithmetic lattices in $PSL_2(\mathbb{C})$ (or equivalently in $SL_2(\mathbb{C})$) are precisely the subgroups commensurable with Bianchi groups (see Theorem 8.2.3 \cite{MR-03}). Therefore Theorem \ref{arithmetic manifolds} imply conjugacy separability of all Bianchi groups.

\begin{thm} The Bianchi groups are conjugacy separable.\end{thm}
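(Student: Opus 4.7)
The plan is to derive the theorem from Theorem~\ref{arithmetic manifolds} by a short reduction through the natural central extension
\[
1 \longrightarrow \{\pm I\} \longrightarrow SL_2(O_d) \longrightarrow PSL_2(O_d) \longrightarrow 1.
\]
First I would note that for each square-free positive integer $d$, the group $SL_2(O_d)$ is itself a non-uniform arithmetic lattice in $SL_2(\mathbb{C})$ (this is the $SL_2$-side of the commensurability characterization in Theorem 8.2.3 of \cite{MR-03} cited in the paragraph preceding the statement). Hence Theorem~\ref{arithmetic manifolds} applies and yields that $SL_2(O_d)$ is conjugacy separable. Since the Bianchi group $PSL_2(O_d)$ is the quotient of $SL_2(O_d)$ by the finite central subgroup $\{\pm I\}$, the theorem reduces to the assertion that conjugacy separability is inherited by quotients modulo a finite central subgroup.

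For this last step I would run the standard diagonal argument. Set $G = SL_2(O_d)$ and $Z = \{\pm I\}$, and suppose $\bar x, \bar y \in G/Z$ are not conjugate. Then $x$ is not conjugate in $G$ to any of the finitely many elements $yz$ with $z \in Z$. Using conjugacy separability of $G$, for each $z \in Z$ I pick a finite-index normal subgroup $N_z \triangleleft G$ in which $xN_z$ and $yzN_z$ remain non-conjugate. Putting $M := Z \cdot \bigcap_{z \in Z} N_z$ gives a finite-index normal subgroup of $G$ containing $Z$, and the images of $x$ and $y$ in the finite quotient $G/M \cong (G/Z)/(M/Z)$ of $PSL_2(O_d)$ are non-conjugate, as required. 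Finiteness of $Z$ is essential here, since only finitely many separating subgroups need to be intersected.

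The actual difficulty does not lie in this corollary; it lies entirely upstream, in Theorem~\ref{arithmetic manifolds}, whose proof is announced to combine the deep subgroup-separability results of Agol--Long--Reid with Minasyan's work and with the centralizer-sensitive commensurability result Theorem~\ref{subida para extensao com cond central.} established in this paper. Once Theorem~\ref{arithmetic manifolds} is available, the present deduction is essentially bookkeeping.
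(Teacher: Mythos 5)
Your proof is correct, but it takes a different (and more roundabout) route than the paper. The paper's own deduction is a one-liner: by Theorem 8.2.3 of \cite{MR-03}, the non-uniform arithmetic lattices in $PSL_2(\mathbb{C})$ (equivalently in $SL_2(\mathbb{C})$) are exactly the groups commensurable with Bianchi groups, and since the proof of Theorem~\ref{arithmetic manifolds} (= Theorem~\ref{lattice}) only uses this abstract commensurability class, the Bianchi groups $PSL_2(O_d)$ are themselves instances of that theorem --- no passage through $SL_2(O_d)$ is needed. You instead apply the theorem literally to the lattice $SL_2(O_d)$ of $SL_2(\mathbb{C})$ and then descend along the central extension by $\{\pm I\}$, using the standard fact that conjugacy separability passes to quotients by finite central subgroups; your execution of that lemma (separating $x$ from each of the finitely many $yz$, $z\in Z$, intersecting the resulting normal subgroups, and enlarging by $Z$) is correct, and centrality of $Z$ is indeed what makes the reduction from ``$\bar x$ not conjugate to $\bar y$'' to ``$x$ not conjugate to any $yz$'' work. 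What your approach buys is independence from the precise phrasing of the $PSL_2$ versus $SL_2$ equivalence: you only ever invoke the theorem for an honest subgroup of $SL_2(\mathbb{C})$. What it costs is an extra lemma that the paper avoids entirely by working with abstract commensurability from the start. Both arguments are sound.
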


\bigskip

The same methods also  allow us to prove conjugacy separability of
virtually Limit groups.
\medskip
\begin{thm}
Virtually Limit groups are conjugacy separable.
\end{thm}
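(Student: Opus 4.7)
\emph{Proof proposal.}\quad The plan is to mirror the two-step strategy used for the Bianchi groups in this paper: reduce the torsion-free case to the conjugacy separability of limit groups via Theorem~\ref{subida para extensao com cond central.}, and handle torsion elements by a separate argument. Let $G$ be a virtually limit group and fix a finite-index limit subgroup $L \le G$; after replacing $L$ by its normal core in $G$, we may assume that $L$ is normal in $G$ and is itself a limit group.

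First I would treat the case where $G$ is torsion-free. Then $G$ is a torsion-free finite extension of the conjugacy separable group $L$ (conjugacy separability of limit groups being known from earlier work). To apply Theorem~\ref{subida para extensao com cond central.} one must verify its centralizer hypothesis for $L$. The key structural input is classical: in a limit group the centralizer of every nontrivial element is maximal abelian and finitely generated free abelian, and such centralizers are closed in the profinite topology of $L$ (by subgroup separability of limit groups, due to Wilton). Together these ingredients should supply exactly the control on centralizers demanded by Theorem~\ref{subida para extensao com cond central.}, yielding conjugacy separability of torsion-free $G$.

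If $G$ has torsion, two non-conjugate elements $x,y$ must still be separated in a finite quotient. Since $L$ is torsion-free the order of any torsion element of $G$ divides $[G:L]$. Separating a torsion element from a torsion-free one is immediate from residual finiteness of $G$. Separating two torsion elements requires more care: one would exploit the JSJ decomposition of $L$ as a graph of groups, extend the associated Bass--Serre action to $G$, and use subgroup separability of $L$ together with standard Bass--Serre arguments to descend the separation to a finite quotient of $G$. The remaining mixed case, where $x,y$ are torsion-free but a conjugator involves torsion, is reduced to Step~1 by first projecting to a finite quotient in which the torsion obstruction vanishes.

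The principal obstacle is Step~1: verifying the centralizer hypothesis of Theorem~\ref{subida para extensao com cond central.} in the precise form required. The abelian structure of limit group centralizers is classical, but converting this into the profinite control demanded by that theorem is where the real technical weight lies. A secondary difficulty is in Step~2, bookkeeping the $G/L$-action on the vertex groups of the JSJ decomposition of $L$.
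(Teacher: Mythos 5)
Your first step is essentially the paper's treatment of infinite-order elements: centralizers of nontrivial elements of a limit group $L$ are finitely generated (maximal) abelian by commutative transitivity, subgroup separability of $L$ gives $\overline{C_L(h)}=\widehat{C_L(h)}$, and the remaining equality $\overline{C_L(h)}=C_{\widehat L}(h)$ comes from hereditary conjugacy separability of limit groups \cite{CZ1} together with Minasyan's criterion (Remark \ref{Minasyan}); you gesture at this profinite control but do not supply that last ingredient. Note also that the right dichotomy is on the element, not on the group: even when $G$ has torsion, Proposition \ref{conjugacy distinguished} handles each infinite-order $a\in G$ directly, so your separate ``mixed case'' does not arise.

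The genuine gap is your Step 2. Separating torsion elements in a finite extension is exactly where conjugacy separability fails to be commensurability invariant, so ``standard Bass--Serre arguments'' applied to a JSJ decomposition of $L$ cannot be expected to work off the shelf: you must produce a finite quotient of $G$ separating two elements that are conjugate in $\widehat G$ but not in $G$, and nothing in your sketch explains how that is done. The paper's route is quite different. Kochloukova's theorem \cite{desi} that some term of the lower central series of $L$ is free makes $L$ (finitely generated free)-by-nilpotent, which places the pair $(G,L)$ in the scope of Theorem \ref{order p}; that theorem handles prime-order elements via the Dyer--Scott decomposition \cite{DS} of $V=\ast_{i}(F_i\times T_i)\ast L$ and a permutational-module argument in $V/V'$. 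Moreover, Theorem \ref{free by polycyclic} also requires $C_G(g)$ to be finitely generated for torsion $g$, which is not automatic: the paper proves it using the cohomological fixed-point bound of Theorem \ref{cohomology} combined with Bridson--Howie \cite{BH} and an induction on the order of $g$. Neither ingredient (the free-by-nilpotent structure feeding Theorem \ref{order p}, nor the finite generation of centralizers of torsion elements) appears in your proposal, so the torsion case remains open as written.
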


{\bf Acknowledgements.} The second author is very grateful to Henry Wilton and Ashot Minasyan for
useful conversations.

\section{ General Results}

The \emph{profinite topology} on a group $G$ is the topology where the
collection of all finite index normal subgroups of $G$ serves as a
fundamental system of neighborhoods of the identity element $1\in
G$, turning $G$ into a topological group.  The completion
$\widehat{G}$ of $G$ with respect to this topology is called the
profinite completion of $G$ and can be expressed as an inverse
limit
$$\widehat{G} =\lim\limits_{\displaystyle\longleftarrow\atop N}G/N, $$
 where $N$ ranges over all finite index normal subgroups of $G$.
 Thus $\widehat{G}$ becomes a profinite group, i.e. a compact
 totally disconnected topological group. Moreover, there exists a
 natural homomorphism $\iota: G \longrightarrow \widehat{G}$  that
 sends $g \mapsto(gN)$, this homomorphism  is a monomorphism when $G$ is
residually finite. If $S$ is a subset of a topological group
$\widehat{G}$, we denote by $\overline{S}$ its closure in
$\widehat{G}$. The profinite topology on $G$ is induced by the
topology of $\widehat{G}$. Note that for a subgroup $H$ of $G$,
the profinite topology  of $H$ can be stronger than the topology
induced by the profinite topology of $G$;  these topologies coincide iff
$\overline H=\widehat H$, for example this is the case  if $G$ is finitely
generated and $H$ is of finite index.

We say that $g\in G$ is \emph{conjugacy distinguished} if its conjugacy
class $g^G$ is closed in the profinite topology of $G$. For residually finite
$G$ this exactly means $g^{\widehat G}\cap G=g^G$.  Note that $G$ is
conjugacy separable iff every element of $G$ is conjugacy
distinguished.

\begin{pro}\label{conjugacy distinguished}
Let $G$ be a  finitely generated group containing a conjugacy separable normal subgroup $H$
of finite index. Let $a\in G$ be an element  such that there
exists a natural number $m$ with $a^m\in H$ and the following conditions
hold:
\begin{itemize}
\item $C_G(a^m)$  is conjugacy separable;
\item $\widehat{C_H(a^m)}= \overline{C_H(a^m)}= C_{\widehat{H}}(a^m)$.
\end{itemize}
Then $a$ is conjugacy distinguished.
\end{pro}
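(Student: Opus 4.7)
The plan is to show $a^{\widehat G} \cap G \subseteq a^G$. Fix $b \in a^{\widehat G} \cap G$ and $\hat g \in \widehat G$ with $\hat g^{-1} a \hat g = b$, and aim to produce an actual $g \in G$ with $g^{-1} a g = b$.

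The first step is to replace $b$ by a $G$-conjugate whose $m$-th power equals $a^m$. Raising to the $m$-th power gives $\hat g^{-1} a^m \hat g = b^m$; since $a^m \in H$ and $\widehat H$ is normal in $\widehat G$ (and $\widehat H \cap G = H$ because $G$ is finitely generated and $H$ has finite index), we get $b^m \in H$. Decomposing $\hat g = g_i \hat h$ with $g_i$ a left coset representative of $H$ in $G$ and $\hat h \in \widehat H$, the elements $g_i^{-1} a^m g_i$ and $b^m$ of $H$ become conjugate by $\hat h$ in $\widehat H$. The conjugacy separability of $H$ then produces $h \in H$ with $h^{-1}(g_i^{-1} a^m g_i) h = b^m$; setting $g_0 := g_i h \in G$ and $c := g_0 b g_0^{-1} \in G$, one finds $c^m = a^m$, while $c$ is still $\widehat G$-conjugate to $a$ via $\tilde g := \hat g g_0^{-1}$. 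The identity $c^m = a^m$ forces $\tilde g \in C_{\widehat G}(a^m)$, and now $a, c \in C_G(a^m)$ with $\tilde g^{-1} a \tilde g = c$.

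The core of the argument is the identification $\widehat{C_G(a^m)} = C_{\widehat G}(a^m)$. The weaker equality $\overline{C_G(a^m)} = C_{\widehat G}(a^m)$ follows by repeating the coset trick: for any $\hat x \in C_{\widehat G}(a^m)$, write $\hat x = g_j \hat k$ with $\hat k \in \widehat H$; then $\hat k$ conjugates $a^m$ to $g_j^{-1} a^m g_j$ in $\widehat H$, so conjugacy separability of $H$ yields $h' \in H$ with $g_j h' \in C_G(a^m)$, and the hypothesis $\overline{C_H(a^m)} = C_{\widehat H}(a^m)$ absorbs the remaining $\widehat H$-factor into $\overline{C_H(a^m)}$. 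Thus $\hat x \in C_G(a^m) \cdot \overline{C_H(a^m)} \subseteq \overline{C_G(a^m)}$. The second hypothesis $\widehat{C_H(a^m)} = \overline{C_H(a^m)}$ says that $C_H(a^m)$ carries its full profinite topology as a subspace of $\widehat G$; since $C_H(a^m)$ has finite index in $C_G(a^m)$, the same passes to $C_G(a^m)$, giving $\widehat{C_G(a^m)} = \overline{C_G(a^m)} = C_{\widehat G}(a^m)$.

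With this identification, $\tilde g$ conjugates $a$ to $c$ inside $\widehat{C_G(a^m)}$, and the assumed conjugacy separability of $C_G(a^m)$ furnishes $e \in C_G(a^m)$ with $e^{-1} a e = c$. Unwinding through $c = g_0 b g_0^{-1}$ yields $b = (e g_0)^{-1} a (e g_0)$ with $e g_0 \in G$, which is the desired conjugator. I expect the principal difficulty to be the centralizer identification $\widehat{C_G(a^m)} = C_{\widehat G}(a^m)$: this is the single place where both centralizer hypotheses combine essentially with the conjugacy separability of $H$, whereas the remaining steps amount to bookkeeping around this identification.
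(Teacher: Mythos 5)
Your proposal is correct and follows essentially the same route as the paper's proof: use $\widehat G = G\widehat H$ and conjugacy separability of $H$ to reduce to the case $a^m = c^m$ with the conjugator lying in $C_{\widehat H}(a^m)$, invoke the second hypothesis to place that conjugator in $\widehat{C_G(a^m)}$, and finish with conjugacy separability of $C_G(a^m)$. The only difference is cosmetic: you conjugate $b$ rather than $a$, and you prove the slightly stronger identification $\widehat{C_G(a^m)} = C_{\widehat G}(a^m)$ where the paper only needs (and only implicitly uses) $\overline{C_H(a^m)} = \widehat{C_H(a^m)} \subseteq \widehat{C_G(a^m)}$.
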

\begin{proof}
Since $H$ is conjugacy separable it is residually finite and therefore so is $G$.
Suppose $b=\gamma^{-1} a \gamma$, for some $\gamma\in \widehat G$, it means
that  $b\in a^{\widehat{G}}\cap G$ and then we need to prove that $b= a^g$, for
some $g\in G$.
Observe that $\widehat{G} = G\widehat{H}$, so that we can write
$\gamma = \delta\gamma_0$, where $\gamma_0\in \widehat{H}$ and
$\delta\in G$. Therefore $b^m = (a^{\gamma})^m =(a^{m})^{\gamma} =
(a^m)^{\delta\gamma_0}$. Now substituting $a$ by $a^{\delta}$, we
can suppose that $\gamma\in \widehat{H}$. Thus $a^m$ and $b^m$ are
conjugate in $\widehat{H}$, and since $H$ is conjugacy separable
there exists $h\in H$ such that $ a^m= (b^{h})^m$. Hence $ a^m =
(b^m)^{h}$, so $a^m$ and $b^m$ are conjugate in $H$. Thus we can
suppose that $a^m = b^m$ and so $\gamma\in C_{\widehat{H}}(a^m)$.

Let $C= C_G(a^m)$ be the centralizer of $a^m$ in $G$, so $b\in C$.
By the second hypothesis $\overline{C_H(a^m)}=
C_{\widehat{H}}(a^m)=\widehat{C_H(a^m)}$, so $\gamma\in
\widehat{C_G(a^m)}$. Now observe that $a, b\in C$ and $\gamma\in
\widehat{C_G(a^m)}$, so
 by the first hypothesis there exists $g\in G$ such that
$a^g= b$.
\end{proof}

\begin{deff} We say that a group $G$ is hereditarily conjugacy
separable if every finite index subgroup of $G$ is conjugacy
separable.\end{deff}

\begin{rem}\label{Minasyan} By Proposition 3.2 in \cite{Ashot} $H$ is hereditarily
 conjugacy separable iff $\overline{C_H(h)}=C_{\overline H}(h)$ for every $h\in H$,
 so hereditary conjugacy separability of $H$ would imply  the second equality of the second condition
of Proposition \ref{conjugacy distinguished}.
\end{rem}

Proposition \ref{conjugacy distinguished} implies the following

\begin{thm}\label{subida para extensao com cond central.}
Let $G$ be a finitely generated torsion free group containing a conjugacy separable
normal subgroup $H$ of finite index. Suppose that for every $1\neq h\in
H$,
\begin{itemize}
\item $C_G(h)$  is conjugacy separable;
\item $\overline{C_H(h)}= \widehat{C_H(h)}= C_{\widehat{H}}(h)$.
\end{itemize}
Then $G$ is conjugacy separable.
\end{thm}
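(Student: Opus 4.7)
The plan is to apply Proposition~\ref{conjugacy distinguished} uniformly, taking for each element of $G$ the same integer $m=[G:H]$. Given $a\in G$, observe that $a^m\in H$: since $H$ is normal of index $m$ in $G$, the quotient $G/H$ has order dividing $m$, so $(aH)^m=H$. If $a=1$ then $a$ is trivially conjugacy distinguished, so assume $a\neq 1$. Here torsion-freeness of $G$ is used exactly once and crucially: it forces $a^m\neq 1$, so $a^m$ is a \emph{nontrivial} element of $H$.

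With $h:=a^m\in H\setminus\{1\}$ in hand, the two hypotheses listed in the theorem for such an $h$ are precisely the two bullet-point hypotheses required by Proposition~\ref{conjugacy distinguished}: namely $C_G(a^m)$ is conjugacy separable, and $\overline{C_H(a^m)}=\widehat{C_H(a^m)}=C_{\widehat H}(a^m)$. The conjugacy separability of $H$ (hypothesized) implies $H$ is residually finite, and since $H$ has finite index in $G$ this passes to $G$, supplying the residual finiteness implicit in the proposition. All the premises of Proposition~\ref{conjugacy distinguished} are therefore met for the element $a$ and the integer $m$, and it concludes that $a$ is conjugacy distinguished in $G$.

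Since every element of $G$ is conjugacy distinguished, $G$ is conjugacy separable, as required. I do not foresee any substantive obstacle: the theorem is essentially a uniform application of the previous proposition, and its content lies entirely in the choice of hypotheses, not in the derivation. The only slightly delicate point is recognizing that torsion-freeness is exactly the ingredient which converts the centralizer hypotheses, stated only for nontrivial $h\in H$, into usable hypotheses for every nontrivial $a\in G$ via the nonzero power $a^m$.
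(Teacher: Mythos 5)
Your proposal is correct and is exactly the argument the paper intends: the paper offers no written proof beyond the remark that Proposition~\ref{conjugacy distinguished} implies the theorem, and your derivation (take $m=[G:H]$, use torsion-freeness to ensure $a^m\neq 1$ so the centralizer hypotheses apply, and invoke residual finiteness inherited from the conjugacy separable finite-index subgroup $H$) is the standard way to fill in that implication. No issues.
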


\begin{thm}\label{order p} Let $G$ be a finitely generated subgroup separable group  having a finite index hereditarily
conjugacy separable  subgroup $H$ and a free normal subgroup $F$
of $H$ such that $H/F$ is   polycyclic. Then every element of prime
order $p$  in $G$ is conjugacy distinguished in $G$.
\end{thm}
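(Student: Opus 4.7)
The plan is to apply Proposition~\ref{conjugacy distinguished} when $a\in H$ and, for $a\notin H$, to use the subgroup separability of $G$ to reduce to the conjugacy of $a$ with a power of itself. As a preliminary step I replace $H$ by its normal core in $G$: the core is of finite index, still hereditarily conjugacy separable (its finite-index subgroups are finite-index in $H$), and its intersection with $F$ is a free normal subgroup with polycyclic quotient. So we may assume $H\trianglelefteq G$.

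If $a\in H$, I apply Proposition~\ref{conjugacy distinguished} with $m=1$. The equality $\overline{C_H(a)}=C_{\widehat H}(a)$ is immediate from Remark~\ref{Minasyan}. What remains is the topological identity $\widehat{C_H(a)}=\overline{C_H(a)}$ and the conjugacy separability of $C_G(a)$. Both rely on the structural fact that conjugation by $a$ is an automorphism of $F$ of order dividing $p$, so by the Dyer--Scott theorem the fixed subgroup $C_F(a)=C_H(a)\cap F$ is a free factor of $F$. Thus $C_H(a)$ is itself (free)-by-(polycyclic), and the free-factor decomposition of $F$ supplies retractions that pull finite-index subgroups of $C_H(a)$ back to finite-index subgroups of $H$, yielding the topological equality. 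Conjugacy separability of $C_G(a)$ then follows by applying Theorem~\ref{subida para extensao com cond central.} to $C_G(a)$, which inherits $C_H(a)$ as a finite-index hereditarily conjugacy separable subgroup with the required centralizer structure.

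If $a\notin H$, then $a^p=1\in H$ forces the image of $a$ in $G/H$ to have order exactly $p$. For any $b\in a^{\widehat G}\cap G$, $b$ likewise has order $p$ and $\langle a\rangle,\langle b\rangle$ are conjugate in $\widehat G$. The subgroup separability of $G$ implies that two finite subgroups of $G$ conjugate in $\widehat G$ are already conjugate in $G$ (a standard LERF consequence), so after replacing $a$ by a $G$-conjugate we may assume $\langle a\rangle=\langle b\rangle$, and hence $b=a^k$ for some $k\in(\mathbb Z/p)^{*}$. The problem reduces to showing that the image of $N_G(\langle a\rangle)$ in $\mathrm{Aut}(\langle a\rangle)$ coincides with that of $N_{\widehat G}(\langle a\rangle)$, which (since $C_G(a)$ has index at most $p-1$ in its normalizer) in turn reduces to $\overline{C_G(a)}=C_{\widehat G}(a)$; this equality is extracted from the analogous identity for $C_H(a)$ established above, applied to the automorphism of $H$ given by conjugation by $a$.

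The main obstacle is the topological equality $\widehat{C_H(a)}=\overline{C_H(a)}$: LERF alone ensures $C_H(a)$ is closed in $\widehat H$ but does not force its profinite topology to coincide with the induced one. This is where the hypothesis that $F$ is free enters essentially, via Dyer--Scott: the free-factor conclusion furnishes the retractions required to transfer finite-index subgroups between $C_H(a)$ and $H$, and it is precisely this structural input that bridges the gap between the hereditary conjugacy separability of $H$ and the centralizer conditions of Proposition~\ref{conjugacy distinguished} for the torsion element $a$.
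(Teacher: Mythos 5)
Your proposal has genuine gaps at the points where the real difficulty lives. First, in the case $a\in H$ you invoke Theorem~\ref{subida para extensao com cond central.} to get conjugacy separability of $C_G(a)$; but that theorem applies only to \emph{torsion free} groups, and $C_G(a)$ contains the element $a$ of order $p$, so the application is illegitimate. Proving conjugacy separability of a group containing torsion is exactly the problem Theorem~\ref{order p} is designed to solve, so this step is essentially circular. Second, in the case $a\notin H$ you assert as ``a standard LERF consequence'' that two finite subgroups of $G$ conjugate in $\widehat G$ are conjugate in $G$. Subgroup separability gives that finitely generated subgroups are closed in the profinite topology; it does not give separability of the conjugacy class of a finite subgroup (that is a form of subgroup conjugacy separability, a genuinely stronger statement that fails to follow from LERF in general). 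Third, even granting the reduction to $b=a^k$, you need $\overline{N_G(\langle a\rangle)}=N_{\widehat G}(\langle a\rangle)$ or $\overline{C_G(a)}=C_{\widehat G}(a)$ for the torsion element $a$, and none of the hypotheses (hereditary conjugacy separability of $H$, Minasyan's criterion) deliver such an equality in $G$ for an element not lying in $H$. Finally, the verification of $\widehat{C_H(a)}=\overline{C_H(a)}$ via ``retractions supplied by the free-factor decomposition'' is only sketched for $C_F(a)$ inside $F$ and does not obviously extend to $C_H(a)$ inside $H$.

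The paper's argument is quite different and avoids all of these issues. After reducing to $G=\langle H,a\rangle$ with $H,F$ normal, it passes to the polycyclic quotient $G_0=G/F$, which is conjugacy separable, to arrange $a_0=b_0$. It then applies Dyer--Scott to the preimage $V$ of $\langle a_0\rangle$, writing $V=\ast_{i}(F_i\times T_i)\ast L$, and exploits the key observation that torsion elements of $V$ are conjugate iff they agree in $V/V'$. The torsion part of $V/V'$ is a permutational $\mathbb{F}_p[N]$-module ($N=C_{H_0}(a_0)$), with non-conjugate torsion elements landing in distinct direct summands; from this one builds a finitely generated subgroup $R$ and a finite quotient $U_N/B$ in which the images of $a$ and $b$ are distinct central elements. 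Subgroup separability enters only through $\overline R=\widehat R$, which forces this finite quotient to factor through $\overline R\overline V'/\overline V'$ and yields the contradiction. You would need to replace both halves of your case analysis with an argument of this kind (or otherwise supply proofs of the separability statements you assumed) for the proposal to go through.
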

\begin{proof}
Pick $a, b\in G$ such that $a = b^{\gamma}$, where $\gamma\in
\widehat{G}$ and $a, b$ have order $p$.
 We need to prove that $a$ and $b$ are conjugate in $G$.

Without loss of generality we can suppose that $H$ and $F$ are
normal in $G$. Indeed we can  replace $H$ and $F$ by their cores
(the core of a subgroup is the intersection of all its conjugates)
$H_G$ and  $F_G= \displaystyle\cap_{g\in G}F^{g}$; then since
$H/F_G$ is a subgroup of $\prod_{r} H/F^r$ and so is nilpotent,
where $r$ ranges over representatives of $G/H$, we can take the
core $(H/F_G)_{G/F_G}$ of $H/F_G$ in $G/F_G$ and replace $H$ by
the preimage of $(H/F_G)_{G/F_G}$ in $G$.

Since $\widehat{G}= G\widehat{H}$ we can write $\gamma= g \delta
$, where $\delta\in \widehat{H}$ and $g\in G$ and so $a=
b^{\gamma}= b^{g \delta}$. Therefore, changing $b$ to $b^{g}$ we
may assume that $\gamma\in \widehat H$ and so $b\in \langle
\widehat H,a\rangle\cap G=\langle H,a\rangle$. Thus we can suppose
that $G= \langle H, a\rangle$.

Put $G_0= G/F$. We shall use subindex $0$ for the images in $G_0$.
Since $G_0$ is polycyclic group it is conjugacy separable (cf.
\cite{Formanek}, \cite{Remelennikov} and \cite{Segal}), so we may
assume that $a_0=b_0$ and so $\gamma_0\in C_{\widehat{H_0}}(a_0)$.

Let $U$ be the preimage of the centralizer
$C_{G_0}(a_0)=C_{H_0}(a_0)\times \langle a_0\rangle$ and let $V$
be the preimage of $\langle a_0\rangle$ in $G$.  By Theorem of
Dyer and Scott \cite{DS} $V= \ast_{i\in I}(F_i \times T_i)\ast L$,
where $T_i = \langle t_i \rangle$ and $t_i$ has order $p$ and $F_i, L$ are free groups. Since
torsion elements are conjugate to elements of  free factors,
 we may assume that $a= t_k$ and $b= t_j$ for some $k,j \in I$. Moreover, since $H_0$ is
 polycyclic
  by Proposition 3.3 in \cite{R-S-Z} $\overline{C_{H_0}(a_0)}= C_{\widehat
  H_0}(a_0)$. Thus using equalities $\widehat G_0=\widehat G/\overline F$ and
  $\overline U/\overline F=\overline{C_{G_0}(a_0)}$  we deduce that $\gamma\in \overline U$. We shall prove that $a$
  and $b$ are conjugate in $U$.

 Consider the abelianization $V/ V'$ of $V$ and  observe that the torsion elements are conjugated in $V$ if and only
 if they coincide in $V/V'$. Therefore, $a$ and $b$ are conjugate in $U$ if, and only if, their images are conjugated in
 $U/V'$. Thus it is enough to prove that the images of $a$ and $b$ are conjugate in
 $U/V'$.

 Suppose not. We use bar  to denote the images of elements  in $U/V'$.
 Observe that $N:=U/V=C_{H_0}(a_0)$ acts on $V/V'$ and $\bar a$ and $\bar b$ are
conjugate in $U/V'$ iff $\bar a$ and $\bar b$ are in the same
$N$-orbit. Note also that since $U$ permutes torsion elements of
$V$, $N$-submodule $M$ of torsion elements in $V/V'$ is
permutational, i.e. is isomorphic to $\oplus_i
\mathbb{F}_p[N/N_i]$, where $N_i$ runs via subgroups of $N$
(namely $N_i=C_N(t_i)$ in our case) and the images of
non-conjugate torsion elements $t_i$ are canonical generators of
different summands (here $\mathbb{F}_p[N/N_i]$ means
$\mathbb{F}_p$-vector space with the basis $[N/N_i]$). Since $a$
and $b$ are not conjugate, $\bar a$ and $\bar b$ are in different
summands $M_a=\mathbb{F}_p [N/N_i]$ and $M_b=\mathbb{F}_p[
N/N_j]$. Put $M_{ab}=M_a\oplus M_b$.

Let $x_1,\ldots, x_n$ be elements of $U$  that whose images modulo
$V$ generate $N$ (recall that $N$ is finitely generated since it
is a subgroup of a finitely generated nilpotent group $H_0$).
Consider the group $R=\langle a,b, x_1,\ldots x_n\rangle$. Then
$RV/V=N$ and we have the following embedding
$M_{ab}\longrightarrow RV'/ V'$.  The elements  $\bar a$ and $\bar
b$ are not conjugate in $RV'/V'$ since they are in different
direct summands $M_a$ and $M_b$ of $M_{ab}$. Moreover, the images
of $\bar a$ and $\bar b$ in the quotient group $U_N$ of $RV'/V'$
modulo the normal subgroup $[N,M_{ab}]$ are central distinct and
so are non-conjugate.  Note that $U_N$ is finitely generated
abelian and so is residually finite. Since $M_{ab}/[N,M_{ab}]\cong
C_p\times C_p$ is finite, there exists a normal subgroup of finite
index $B$ in $U_N$ intersecting trivially $M_{ab}/[N,M_{ab}]$.
Thus the images of $a$ and $b$  in a finite quotient group $U_N/B$
of $R$ are distinct central elements.

On the other hand, since $G$ is subgroup separable $\overline
R=\widehat R$. It follows that there is a natural epimorphism
$\overline R\longrightarrow U_N/B$. Since the intersection
$\overline V'\cap \overline R$ of the closures of $V'$ and $R$ in
$\widehat G$ is in the kernel of the natural epimorphism
$\overline R\longrightarrow U_N/B$ we deduce that this epimorphism
factors via $\overline R\overline V'/\overline V'$. Thus to arrive
at contradiction it suffices to show that the images of $a$ and
$b$ in $\overline R\overline V'/\overline V'$ are conjugate.

Observe that $\overline N:=\overline U/\overline V$ acts on
$\overline V/\overline V'$. Since a finitely generated
abelian-by-polycyclic group is residually finite
(see Theorem 7.2.1 in \cite{LR}), $H/V'$ is residually
finite and so  $\overline V'\cap H=V'$. This means
that  $N$-module $V/V'$ embeds naturally in
$\bar N$-module $\overline V/\overline V'$. Thus the composition
of the natural homomorphisms $M\longrightarrow V/V'\longrightarrow
\overline V/\overline V'$ is injection and so  $M$ embeds
naturally in $\overline V/\overline V'$. Denote by $\overline
M_{ab}$ the closure of $M_{ab}$  in $\overline V/\overline V'$.
Since $\bar a$ and $\bar b$ are conjugate in $\overline
U/\overline V'$, $\bar a$ and $\bar b$ are in the same $\overline
N$-orbit. Since $\overline R\overline V/\overline V=\overline N$,
$\bar a$ and $\bar b$ are conjugate in $\overline R\overline
V'/\overline V'$ as required.
\end{proof}

\begin{cor}\label{free-by-polycyclic-by-p} Let $G=H\rtimes C_p$
be a semidirect product of a finitely generated
hereditarily conjugacy separable subgroup separable
free-by-polycyclic group $H$ and a group $C_p$ of prime order $p$.
Suppose  that for every $1\neq h\in H$ the centralizer
$C_G(h)$  is hereditarily conjugacy separable and
$\overline{C_H(h)}=\widehat{C_{H}(h)}$ in $H$. Then $G$ is hereditarily
conjugacy separable.\end{cor}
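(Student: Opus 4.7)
The plan is to show every element of $G$ is conjugacy distinguished, then verify that the same argument transfers to every finite index subgroup of $G$, yielding hereditary conjugacy separability. The key inputs are Proposition \ref{conjugacy distinguished} for elements whose nontrivial power lies in $H$, and Theorem \ref{order p} for elements of order $p$. As a preliminary, since $H$ is a subgroup separable normal subgroup of finite index, the group $G$ itself is subgroup separable, so one of the hypotheses of Theorem \ref{order p} is automatic.

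I would partition the elements of $G$ into three types. For $1\neq a\in H$, I apply Proposition \ref{conjugacy distinguished} with $m=1$: the conjugacy separability of $C_G(a)$ and the equality $\overline{C_H(a)}=\widehat{C_H(a)}$ are direct hypotheses of the corollary, while the remaining equality $\overline{C_H(a)}=C_{\widehat H}(a)$ follows from Remark \ref{Minasyan}, using that $H$ is hereditarily conjugacy separable. For $a\in G\setminus H$ with $a^p\neq 1$, I apply Proposition \ref{conjugacy distinguished} with $m=p$, noting $a^p\in H\setminus\{1\}$ and repeating the same verifications for $a^p$. For $a\in G\setminus H$ of order $p$, I invoke Theorem \ref{order p}: the preliminaries give $G$ subgroup separable, $H$ is hereditarily conjugacy separable by hypothesis, and the free normal subgroup $F\leq H$ with polycyclic quotient is provided by the free-by-polycyclic assumption on $H$.

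For the hereditary conclusion, let $K\leq G$ have finite index and set $K_0:=K\cap H$, a subgroup of finite index in $H$. If $K\subseteq H$, then $K=K_0$ is conjugacy separable because $H$ is hereditarily conjugacy separable. Otherwise $[K:K_0]=p$, and one runs the same case analysis with the pair $(K,K_0)$ in place of $(G,H)$, noting that Proposition \ref{conjugacy distinguished} and Theorem \ref{order p} only require $K_0\trianglelefteq K$ of finite index with the appropriate structural properties and not a splitting of $K$. Most hypotheses are inherited transparently: $K_0$ stays hereditarily conjugacy separable and subgroup separable, $K_0\cap F$ is a free normal subgroup of $K_0$ with polycyclic quotient (subgroups of polycyclic groups are polycyclic), and $C_K(h)$ has finite index in $C_G(h)$, hence is hereditarily conjugacy separable.

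The main obstacle I anticipate is verifying the centralizer closure condition $\overline{C_{K_0}(h)}=\widehat{C_{K_0}(h)}$ in $\widehat{K_0}$ for $1\neq h\in K_0$, starting from the corresponding equality in $\widehat H$. Concretely, one must show that the profinite topology of $C_H(h)$ restricted to the finite index subgroup $C_{K_0}(h)$ coincides with the full profinite topology of $C_{K_0}(h)$; this requires some care with the open-subgroup structure of $\widehat{K_0}$ inside $\widehat H$ and an index count comparing $\overline{C_{K_0}(h)}$ with $\overline{C_H(h)}\cap\widehat{K_0}$. Once this routine but delicate point is settled, the case-by-case argument above applied to $K$ completes the proof.
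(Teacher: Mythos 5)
Your proposal is correct and follows exactly the route the paper takes: the published proof is the one-line statement ``Follows from Theorem \ref{order p} and Proposition \ref{conjugacy distinguished} combined with Remark \ref{Minasyan}'', and your case split (elements with a nontrivial power in $H$ via Proposition \ref{conjugacy distinguished}, order-$p$ elements via Theorem \ref{order p}) together with the passage to finite index subgroups is the intended elaboration. The point you flag as delicate is in fact standard --- the profinite topology induced on a finite index subgroup is always its full profinite topology, so $\overline{C_{K_0}(h)}=\widehat{C_{K_0}(h)}$ follows directly from $\overline{C_H(h)}=\widehat{C_H(h)}$ and $[C_H(h):C_{K_0}(h)]<\infty$.
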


\begin{proof} Follows from Theorem \ref{order p} and Proposition
\ref{conjugacy distinguished} combined with Remark
\ref{Minasyan}.\end{proof}

\begin{thm}\label{free by polycyclic}
Let $G$ be a finitely generated subgroup separable group. Suppose
there exists a free-by-polycyclic hereditarily conjugacy separable
subgroup $H$ of $G$ of finite index. If $C_G(h)$ is hereditarily
conjugacy separable for every $h\in H$ and $C_G(g)$ is finitely
generated for every $g\in G$, then $G$ is conjugacy separable.
\end{thm}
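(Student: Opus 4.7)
The plan is to show every $a \in G$ is conjugacy distinguished, splitting by the order of $a$: infinite-order elements are handled by Proposition~\ref{conjugacy distinguished}, finite-order elements by Theorem~\ref{order p}. After replacing $H$ by its normal core in $G$ (still finite-index, hereditarily conjugacy separable, and free-by-polycyclic, since all three properties pass to finite-index subgroups), one may assume $H \triangleleft G$.

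If $a$ has infinite order, set $m = [G:H]$ so that $1 \neq a^m \in H$, and apply Proposition~\ref{conjugacy distinguished}. The hypothesis that $C_G(a^m)$ is conjugacy separable is immediate from the assumption that $C_G(h)$ is hereditarily conjugacy separable for every $h \in H$. The equality $\overline{C_H(a^m)} = C_{\widehat H}(a^m)$ is Remark~\ref{Minasyan} applied to the hereditarily conjugacy separable group $H$. Finally, $\widehat{C_H(a^m)} = \overline{C_H(a^m)}$ (i.e.\ $C_H(a^m)$ carries its own profinite topology inside $\widehat H$) follows from subgroup separability of $G$---inherited by its finite-index subgroup $H$---together with the finite generation of $C_G(a^m)$, which passes to the finite-index subgroup $C_H(a^m) = C_G(a^m) \cap H$; this is the standard fact that finitely generated subgroups of LERF groups inherit the ambient profinite topology.

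If $a$ has finite order, the hypotheses of Theorem~\ref{order p} are all in place: $G$ is finitely generated and subgroup separable, $H$ is finite-index and hereditarily conjugacy separable, and the definition of free-by-polycyclic supplies a free normal subgroup of $H$ with polycyclic quotient. For $a$ of prime order, Theorem~\ref{order p} applies directly. For composite order $n$, I would adapt the proof of Theorem~\ref{order p} verbatim: pass to the virtually polycyclic quotient $G_0 = G/F_G$ (conjugacy separable by Remeslennikov--Formanek--Segal) to reduce to $a_0 = b_0$; use a Dyer--Scott style decomposition of the preimage $V$ of $\langle a_0 \rangle$ as a free product of free groups with semidirect products $F_i \rtimes T_i$ where $T_i$ is finite cyclic of order dividing $n$; and run the abelianization argument in $V/V'$, the torsion submodule now appearing as a direct sum of permutation $N$-modules (with coefficients in $\mathbb{Z}/n\mathbb{Z}$ rather than $\mathbb{F}_p$), non-conjugate torsion elements still landing in distinct summands.

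The main obstacle is this composite-torsion adaptation. The prime-order proof of Theorem~\ref{order p} uses the $\mathbb{F}_p$-module structure crucially when constructing the finite separating quotient $U_N/B$; for composite $n$ one must verify that the analogous quotient $M_{ab}/[N, M_{ab}]$ remains a finite abelian group, so that the residually finite $U_N$ still admits a finite quotient separating the central images of $\bar a$ and $\bar b$. Once this is in hand, the concluding steps---using subgroup separability of $G$ to lift the separating epimorphism to the closure $\overline R$, and residual finiteness of the abelian-by-polycyclic group $H/V'$ to ensure $\overline{V'}\cap H = V'$---transfer essentially unchanged from the prime-order case.
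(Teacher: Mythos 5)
Your treatment of infinite-order elements matches the paper's (Proposition \ref{conjugacy distinguished} applied to $a^{[G:H]}\in H$, with the two centralizer conditions supplied by Remark \ref{Minasyan} and by subgroup separability plus finite generation of $C_G(a^m)$). The torsion case is where your proposal has a genuine gap. You propose to handle an element $a$ of composite order $n$ by rerunning the proof of Theorem \ref{order p} with $n$ in place of $p$, and you correctly flag this as the main obstacle --- but you do not close it, and it is not a routine adaptation. The Dyer--Scott decomposition $V=\ast_{i}(F_i\times T_i)\ast L$ is invoked for automorphisms of prime order; for composite $n$ the structure of the virtually free preimage $V$ is more delicate, and both the criterion ``torsion elements are conjugate iff they agree in $V/V'$'' and the identification of the torsion of $V/V'$ as a permutation module $\oplus_i\mathbb{F}_p[N/N_i]$, with non-conjugate torsion elements generating distinct summands, lean on that specific prime-order free-product shape. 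As written, the composite case remains an unproved claim.

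The paper avoids the issue entirely and never needs a composite-order analogue of Theorem \ref{order p}. Given $a$ of finite order, it takes $t$ with $c=a^t$ of prime order $p$, forms $G_1=\langle c,H\rangle\cong H\rtimes C_p$, and invokes Corollary \ref{free-by-polycyclic-by-p} (which packages Theorem \ref{order p} in the prime case together with Proposition \ref{conjugacy distinguished} and Remark \ref{Minasyan}) to conclude that $G_1$ is hereditarily conjugacy separable. Then $C_{G_1}(c)$ is dense in $C_{\widehat{G_1}}(c)$ by Remark \ref{Minasyan}, while $\widehat{C_{G_1}(c)}=\overline{C_{G_1}(c)}$ follows from subgroup separability of $G$ and finite generation of the centralizer; Proposition \ref{conjugacy distinguished}, applied to the pair $(G,G_1)$ with the power $a^t=c\in G_1$, then shows $a$ is conjugacy distinguished. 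To repair your outline, replace the composite-order adaptation by this bootstrap: use the prime-order theorem only to upgrade $H$ to the larger hereditarily conjugacy separable finite-index subgroup $\langle a^t,H\rangle$, and then run Proposition \ref{conjugacy distinguished} on $a$ relative to that subgroup.
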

\begin{proof} We need to prove that any element $a\in G$ is conjugacy distinguished. Suppose $a$
has finite order. Let $t$ be an integer such that $c=a^t$ has
prime order $p$.  Replacing $H$ by its core we may assume that $H$
is normal in $G$.  By Corollary \ref{free-by-polycyclic-by-p}
$G_1:=\langle c, H\rangle$ is hereditarily conjugacy separable and
so by Remark \ref{Minasyan} $ C_{{G_1}}(c)$ is dense in
$C_{\widehat G_1}(c)$. Since $G$ is subgroup separable and
$C_{G_1}(c)$ is finitely generated,
$\overline{C_{G_1}(c)}=C_{\widehat G_1}(c)$. Then by Proposition
2.1 (applied to the pair $(G,G_1)$) $a$ is conjugacy
distinguished.

  Suppose now $a$ has infinite
order.  Choose any conjugacy separable subgroup $H$ of finite
index in $G$.  Then $a^m\in H\setminus \{1\}$ for some $m\in
\mathbb{N}$, so similarly to the previous paragraph the result
follows from Proposition 2.1 combined with Remark 2.2.
\end{proof}

\section{Groups commensurable with Bianchi and Limit groups}

The proof of the next proposition was communicated to us by Henry
Wilton.

\begin{pro}\label{non-uniforme arithmetic}
A non-uniform arithmetic lattice in $SL_2(\mathbb{C})$ possesses a
(finitely generated free)-by-cyclic hereditarily conjugacy
separable subgroup of finite index.
\end{pro}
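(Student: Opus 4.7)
The plan is to produce the desired subgroup by combining two deep external inputs: the virtual fibering theorem for cusped arithmetic hyperbolic $3$-manifolds provided by the work in \cite{alr}, \cite{lr} together with Wise's virtual compact specialness, and Minasyan's theorem from \cite{Ashot} that virtually compact special groups are hereditarily conjugacy separable.

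First, given a non-uniform arithmetic lattice $\Gamma \leq SL_2(\mathbb{C})$, I would pass by Selberg's lemma to a torsion-free subgroup $\Gamma_0 = \pi_1(M)$ of finite index, where $M$ is a cusped finite-volume hyperbolic $3$-manifold. I would then apply Agol's virtual fibering theorem to obtain a finite cover $M' \to M$ that fibers over the circle, yielding a subgroup $\Gamma_1 = \pi_1(M')$ sitting in a short exact sequence
\[
1 \longrightarrow F \longrightarrow \Gamma_1 \longrightarrow \mathbb{Z} \longrightarrow 1,
\]
where $F = \pi_1(\Sigma)$ is the fundamental group of the fiber surface $\Sigma$. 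Because $M$ is non-compact, $\Sigma$ must have non-empty boundary, so $F$ is a finitely generated free group and $\Gamma_1$ is finitely generated (free)-by-(infinite cyclic).

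Next, I would invoke the fact that the fundamental group of a cusped finite-volume hyperbolic $3$-manifold is virtually compact special, giving a finite-index subgroup $K \leq \Gamma_0$ which is the fundamental group of a compact special cube complex. Setting $\Gamma_2 := \Gamma_1 \cap K$, this subgroup is again of finite index in $\Gamma$; it inherits the (free)-by-cyclic structure from $\Gamma_1$ since $\Gamma_2 \cap F$ is finitely generated free and $\Gamma_2/(\Gamma_2 \cap F)$ embeds in $\mathbb{Z}$, hence is cyclic. On the other hand $\Gamma_2$ is of finite index in $K$, so it is itself virtually compact special, and Minasyan's Theorem~1.1 of \cite{Ashot} yields that $\Gamma_2$ is hereditarily conjugacy separable. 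This $\Gamma_2$ is the required subgroup.

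The heavy lifting is absorbed into the two quoted external theorems; once they are in hand the argument is a routine finite-index intersection. The main conceptual point that deserves attention is that the non-compactness of $M$ is precisely what forces the fiber to have boundary and hence be a \emph{free} group, landing us in the (free)-by-cyclic class rather than the more general (surface)-by-cyclic class; the secondary point is that one must order the reductions so that the fibered subgroup and the virtually compact special subgroup are intersected simultaneously, which is what makes both structural properties available on $\Gamma_2$.
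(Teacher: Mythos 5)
Your construction has the same skeleton as the paper's: produce a finite-index fibered (finitely generated free)-by-$\mathbb{Z}$ subgroup via Agol's virtual fibering criterion, produce a finite-index hereditarily conjugacy separable subgroup by embedding into a ``cubical'' ambient group, and intersect the two. The differences lie in the second ingredient. The paper stays with right-angled \emph{Coxeter} groups: by \cite{alr} Bianchi groups are (virtually) geometrically finite in reflection groups of right-angled polyhedra, by Theorem 1.6 of \cite{lr} they contain a finite-index subgroup $H$ that is a \emph{virtual retract} of such a Coxeter group, and then Corollary 2.3 of \cite{Ashot} (right-angled Coxeter groups are virtually hereditarily conjugacy separable) together with Theorem 3.4 of \cite{CZ2} (virtual retracts inherit hereditary conjugacy separability) give that $H$ is hereditarily conjugacy separable. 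You instead route through virtual compact specialness and right-angled Artin groups, a heavier (and, relative to this paper, anachronistic) input, though a workable one for the non-uniform arithmetic case. Also, the paper deduces freeness of the fiber from virtual cohomological dimension $2$ plus Corollary 1 of \cite{B2}, rather than from non-compactness of the fiber surface; both are fine.

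The one step you must repair is the final inference: you assert that $\Gamma_2$ is \emph{virtually} compact special and that Minasyan's Theorem 1.1 then makes it hereditarily conjugacy separable. Theorem 1.1 of \cite{Ashot} concerns right-angled Artin groups, not virtually compact special groups; and, more importantly, knowing that a finite-index subgroup of $\Gamma_2$ is hereditarily conjugacy separable does not yield conjugacy separability of $\Gamma_2$ itself --- the failure of conjugacy separability to pass to finite-index overgroups is precisely the difficulty this entire paper is written to circumvent. Your argument survives only because the specific $\Gamma_2=\Gamma_1\cap K$ you build is a finite-index subgroup of the compact special group $K$, hence itself compact special, hence (by the Haglund--Wise canonical completion and retraction) a virtual retract of a right-angled Artin group; Minasyan's theorem on such Artin groups combined with the stability of hereditary conjugacy separability under virtual retracts (Theorem 3.4 of \cite{CZ2}) then gives the conclusion. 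You need to make this chain explicit: as written, the justification is incorrect even though the group you exhibit does have the required property.
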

\begin{proof} Recall that non-uniform arithmetic lattices in $SL_2(\mathbb{C})$ are precisely
 the subgroups commensurable with Bianchi groups (see Theorem 8.2.3 \cite{MR-03}). Therefore
we just need to show the existence of a (finitely generated
free)-by-cyclic hereditarily conjugacy separable subgroup of
finite index in a Bianchi group $G$. By \cite{alr} Bianchi groups
are (virtually) geometrically finite subgroups of right angled
Coxeter groups obtained as  groups of isometries of the hyperbolic
space $\mathbb H^n$ (for some $n$) generated by the reflections
about co-dimension one faces of some right angled polyhedron.
Therefore by Theorem 1.6 in \cite{lr} Bianchi groups have a
subgroup $H$ of finite index that is virtual retract of a right
angled Coxeter group. By Corollary 2.3 in \cite{Ashot} a right
angled Coxeter group is virtually hereditarily conjugacy
separable. Since a virtual retract of a hereditarily conjugacy
separable group is hereditarily conjugacy separable (see Theorem
3.4 in \cite{CZ2}) it follows that $H$ is hereditarily conjugacy
separable. On the other hand by \cite{A} there exist a finite
index surface-by-infinite cyclic subgroup $U=S\rtimes \mathbb{Z}$
in the Bianchi group $G$. It is well known that Bianchi groups
have virtual cohomological dimension 2 ( see Theorem 11.4.4 \cite{BS-73}),
so from \cite[Corollary 1]{B2} $S$ is finitely generated free. Thus $U\cap H$ is the
desired subgroup.
\end{proof}

\begin{thm}\label{lattice}
Non-uniform arithmetic lattices of $SL_2(\mathbb{C})$ are
conjugacy separable.
\end{thm}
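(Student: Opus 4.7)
The strategy is to verify the hypotheses of Theorem \ref{free by polycyclic} for $G$, with the finite-index subgroup supplied by Proposition \ref{non-uniforme arithmetic}.

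First, $G$ is finitely generated as a lattice, and subgroup separability of non-uniform arithmetic lattices in $SL_2(\mathbb{C})$ follows from the work of Agol, Long and Reid \cite{alr}, \cite{lr} (subgroup separability passes through commensurability for finitely generated groups). Proposition \ref{non-uniforme arithmetic} then gives a (finitely generated free)-by-cyclic hereditarily conjugacy separable subgroup $H$ of finite index in $G$; since cyclic groups are polycyclic, $H$ is free-by-polycyclic in the sense demanded by Theorem \ref{free by polycyclic}.

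The main task is to verify the two centralizer conditions: that $C_G(g)$ is finitely generated for every $g\in G$, and that $C_G(h)$ is hereditarily conjugacy separable for every nontrivial $h\in H$. I would argue both from the classical structure of centralizers in $SL_2(\mathbb{C})$: for any element $g$ outside the (finite) center, $C_{SL_2(\mathbb{C})}(g)$ is abelian — a one-complex-dimensional torus when $g$ is loxodromic or elliptic, and the unipotent upper-triangular subgroup (extended by the center) when $g$ is parabolic. Intersecting with the discrete lattice $G$ yields a discrete subgroup of a two-real-dimensional abelian Lie group, hence a finitely generated abelian group of rank at most two. For $g=1$ the centralizer $C_G(1)=G$ is again finitely generated. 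Since every finite-index subgroup of a polycyclic group is polycyclic, and polycyclic groups are conjugacy separable by the theorems of Formanek, Remeslennikov and Segal cited in the paper, each such $C_G(h)$ is hereditarily conjugacy separable.

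With these facts in place, Theorem \ref{free by polycyclic} applies directly and yields that $G$ is conjugacy separable. The genuinely deep ingredient is Proposition \ref{non-uniforme arithmetic} itself, which packages the virtual specialness of Bianchi groups (Agol--Long--Reid) together with Minasyan's conjugacy separability theorem for right-angled Coxeter groups; once that proposition is granted, the centralizer bookkeeping reduces to standard Lie-theoretic and polycyclic facts, so I do not anticipate any serious obstacle beyond being careful about the (finite) center of $SL_2(\mathbb{C})$ when discussing centralizers.
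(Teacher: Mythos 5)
Your proposal is correct and follows essentially the same route as the paper: feed the subgroup $H$ supplied by Proposition \ref{non-uniforme arithmetic} into Theorem \ref{free by polycyclic}, after checking that centralizers of nontrivial elements of $G$ are finitely generated virtually abelian and hence hereditarily conjugacy separable. The only cosmetic difference is that you derive the centralizer structure directly from the description of centralizers in $SL_2(\mathbb{C})$, whereas the paper cites Lemma 2.2 of \cite{CZ3}; both arguments are sound.
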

\begin{proof}
Let $G$ be a non-uniform arithmetic lattice of $SL_2(\mathbb{C})$.
By Proposition \ref{non-uniforme arithmetic} there exists a finite
index hereditarily conjugacy separable subgroup $H$ of $G$ such
that $H= F\rtimes \mathbb{Z}$ with $F$ free of finite rank. Then
for every $h\in H$ the centralizer $C_G(h)$ is finitely generated
virtually abelian by Lemma 2.2 in \cite{CZ3} and therefore is
hereditarily conjugacy separable. The equality $\overline{C_H(h)}=
\widehat{C_H(h)}$ holds since $H$ is subgroup separable (see
Theorem 3.4 in \cite{lr}).
 Thus the result follows
from Theorem \ref{free by polycyclic}.
\end{proof}

Since non-uniform arithmetic lattices in $PSL_2(\mathbb{C})$ (or
equivalently in $SL_2(\mathbb{C})$) are precisely the subgroups
commensurable with Bianchi groups (see Theorem 8.2.3 \cite{MR-03})
Theorem \ref{lattice} implies the conjugacy separability of the
Bianchi groups.

\begin{thm}
The Bianchi groups are congugacy separable.
\end{thm}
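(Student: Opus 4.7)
The plan is to derive this as an immediate corollary of Theorem \ref{lattice}. First, I would recall from Theorem 8.2.3 of \cite{MR-03} that a discrete subgroup of $PSL_2(\mathbb{C})$ (equivalently, of $SL_2(\mathbb{C})$) is a non-uniform arithmetic lattice precisely when it is commensurable with some Bianchi group $PSL_2(O_d)$. In particular, taking a Bianchi group commensurable with itself shows that each Bianchi group is a non-uniform arithmetic lattice of $SL_2(\mathbb{C})$.

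With this characterization in hand, the statement follows by applying Theorem \ref{lattice} directly: every non-uniform arithmetic lattice of $SL_2(\mathbb{C})$ is conjugacy separable, hence so is every Bianchi group. There is essentially no obstacle, since the substantive content has already been packaged into Theorem \ref{lattice}; namely, Proposition \ref{non-uniforme arithmetic} produces a finite index (finitely generated free)-by-cyclic hereditarily conjugacy separable subgroup in any such lattice, and Theorem \ref{free by polycyclic} then upgrades this to conjugacy separability of the full group using that centralizers in the lattice are finitely generated virtually abelian and that the lattice is subgroup separable. The only minor point worth flagging is the routine passage between $SL_2(\mathbb{C})$ and $PSL_2(\mathbb{C})$, which is harmless because the commensurability classification is stated for both and passes through the standard central extension.
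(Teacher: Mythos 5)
Your proposal matches the paper's own argument exactly: the paper deduces this theorem as an immediate consequence of Theorem \ref{lattice}, citing Theorem 8.2.3 of \cite{MR-03} to identify Bianchi groups as non-uniform arithmetic lattices of $SL_2(\mathbb{C})$. Nothing further is needed.
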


We come to limit groups. Recall that a fully residually free group
is a group that satisfies the following condition:
 for each finite subset $K\subset G$ of  elements, there exist a
homomorphism $\varphi : G \rightarrow F$ to some free group $F$
such that the restriction $\varphi_{|_K}$ is injective. A finitely
generated fully residually free group is called a limit group.

We recall now a construction of a limit group. Let $F$ be a free
group of finite rank and put ${\cal{Y}}_1= F$. For $i > 1$, define
the class ${\cal{Y}}_i$ to consist of all groups that are free
products $G_i=G_{i-1}\ast_C A$ of a group $G_{i-1}\in{\cal
Y}_{i-1}$ and a free abelian group $A$ of finite rank amalgamating
self-centralizing  cyclic subgroup of $G_{i-1}$ with a subgroup of
$A$ generated by a generater of $A$ (this construction is known as
an extension of the centralizer). If $L$ is a limit group then by
Theorem 4 in \cite{khamya2}, there exists $n$ such that $L$ embeds
in some $G_n\in {\cal Y}_n$.
\medskip
We shall need now the following abstract analogue of Theorem 1 in
\cite{KZ-07}.

\begin{thm}\label{cohomology} Let $H$ be a  group of finite cohomological dimension
$cd(H) = d$ such that  for every $i \geq 1$ the cohomology group
$H^i (H, \mathbb{F}_p)$ is finite and $\sigma$
 be an  automorphism of $H$ of order $p$. Then for the
 group $P$ of fixed points of $\sigma$ we have
 $$
 \sum_{j \geq 0} dim H^j (P, \mathbb{F}_p) \leq \sum_{j \geq 0} dim H^j (H, \mathbb{F}_p)
 .$$
 \end{thm}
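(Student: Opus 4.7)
The plan is to adapt the pro-$p$ argument of Theorem 1 in \cite{KZ-07} to the abstract setting. Form the semidirect product $G = H \rtimes \langle\sigma\rangle$, giving the extension $1 \to H \to G \to \langle\sigma\rangle \to 1$. Since $P = \mathrm{Fix}(\sigma)$ centralizes $\sigma$ in $G$, one has $C_G(\sigma) = P \times \langle\sigma\rangle$ as an internal direct product. The main tool is the Lyndon-Hochschild-Serre spectral sequence
$$E_2^{i,j} = H^i(\langle\sigma\rangle, H^j(H, \mathbb{F}_p)) \Rightarrow H^{i+j}(G, \mathbb{F}_p),$$
which has nonzero rows only in the range $0 \le j \le d$ (since $cd(H) = d$), each row being finite-dimensional by hypothesis.

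Let $\zeta \in H^*(\langle\sigma\rangle, \mathbb{F}_p)$ denote the polynomial generator (degree $2$ for odd $p$, degree $1$ for $p=2$). It acts multiplicatively on the entire spectral sequence and commutes with differentials, so one may invert it. After localization the $j$-th row $E_2^{*,j}[\zeta^{-1}]$ becomes a free module over $\mathbb{F}_p[\zeta^{\pm 1}]$ (times an exterior factor for odd $p$), whose $\mathbb{F}_p[\zeta^{\pm 1}]$-rank counts the non-free indecomposable $\mathbb{F}_p[C_p]$-summands of $V_j := H^j(H, \mathbb{F}_p)$ and is therefore at most $\dim_{\mathbb{F}_p} V_j$. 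Convergence of the spectral sequence then yields
$$\operatorname{rank}_{\mathbb{F}_p[\zeta^{\pm 1}]} H^*(G, \mathbb{F}_p)[\zeta^{-1}] \;\le\; (\text{exterior factor}) \cdot \sum_{j=0}^{d} \dim_{\mathbb{F}_p} H^j(H, \mathbb{F}_p).$$
A Smith-type localization theorem then identifies the restriction
$$H^*(G, \mathbb{F}_p)[\zeta^{-1}] \;\cong\; H^*(P \times \langle\sigma\rangle, \mathbb{F}_p)[\zeta^{-1}] \;=\; H^*(P, \mathbb{F}_p) \otimes H^*(\langle\sigma\rangle, \mathbb{F}_p)[\zeta^{-1}]$$
as an isomorphism; the rank of the right hand side over $\mathbb{F}_p[\zeta^{\pm 1}]$ is $(\text{exterior factor}) \cdot \sum_j \dim_{\mathbb{F}_p} H^j(P, \mathbb{F}_p)$, and the common exterior factors cancel to produce the desired inequality.

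The main obstacle is justifying the localization isomorphism in this abstract, non-profinite setting. In \cite{KZ-07} the pro-$p$ version is handled by exploiting specific features of pro-$p$ cohomology. Here I would instead argue geometrically: the hypotheses ensure $H$ admits a sufficiently nice $K(H,1)$ CW complex $X$ of dimension $d$; realize $\sigma$ up to homotopy as a cellular self-map of $X$ generating a $C_p$-action, and invoke Borel's equivariant Smith inequality for the Borel construction $EC_p \times_{C_p} X$. The delicate technical point is to identify the fixed point set $X^{C_p}$ with a $K(P,1)$, so that its $\mathbb{F}_p$-cohomology is $H^*(P, \mathbb{F}_p)$; carrying this out carefully forms the bulk of the work.
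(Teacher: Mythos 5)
Your overall skeleton --- the Lyndon--Hochschild--Serre spectral sequence for $1\to H\to G\to\langle\sigma\rangle\to 1$, localization at the periodicity class $\zeta$, and the rank count on each row --- is exactly the argument of Theorem 1 in \cite{KZ-07} that the paper's proof invokes, and that part is sound. The genuine gap is the step you yourself flag as ``the main obstacle'': the localization isomorphism identifying $H^*(G,\mathbb{F}_p)[\zeta^{-1}]$ with $H^*(P\times\langle\sigma\rangle,\mathbb{F}_p)[\zeta^{-1}]$. You leave it unproved, and the geometric route you sketch would not go through as stated. First, an order-$p$ automorphism of $H$ cannot in general be realized by an honest cellular $C_p$-action on a $d$-dimensional $K(H,1)$ (a Nielsen-realization type problem; the only canonical action available, the deck action of $G/H$ on the cover of $K(G,1)$ corresponding to $H$, is free and has empty fixed set). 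Second, even granting such an action, its fixed point set has no reason to be a $K(P,1)$: Smith theory controls only the mod-$p$ cohomology of the fixed set, whose components correspond to the various conjugacy classes of order-$p$ lifts of $\sigma$ to $G$, not just to $P$.

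The correct way to close the gap --- and what the paper's one-line proof is pointing at --- is to replace your geometric sketch by the algebraic localization theorem for Farrell cohomology of groups of finite virtual cohomological dimension (the Remark following Theorem 7.4 in Chapter X of \cite{Brown2}), applied to $G=H\rtimes\langle\sigma\rangle$, which has $\mathrm{vcd}(G)=d<\infty$ and whose finite subgroups all have order dividing $p$ since $H$ is torsion-free. In geometric terms this amounts to running the Smith-theory argument not on a $K(H,1)$ but on a finite-dimensional contractible proper $G$-CW-complex $E$ (which exists because $\mathrm{vcd}(G)<\infty$): the fixed set $E^{\sigma}$ is nonempty and $\mathbb{F}_p$-acyclic, the torsion-free group $P=C_G(\sigma)\cap H$ acts freely on it, so $H^*(E^{\sigma}/P;\mathbb{F}_p)\cong H^*(P;\mathbb{F}_p)$, which is what feeds into the localized computation. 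You should also note that the localization theorem produces a product over all conjugacy classes of order-$p$ subgroups of $G$, of which $\langle\sigma\rangle$ is only one factor; since you only need an upper bound, discarding the other factors is harmless, but the step must be stated. With that citation in place of the geometric sketch, your argument coincides with the paper's intended proof.
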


 \begin{proof} The proof is a repetition of the proof of Theorem 1
 in \cite{KZ-07} with the use of  Remark after Theorem 7.4 in
 \cite{Brown2} instead of Theorem 2 in \cite{KZ-07}.\end{proof}

\begin{thm} A group commensurable with a limit group is conjugacy
separable.\end{thm}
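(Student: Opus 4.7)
The plan is to follow the strategy of Theorem \ref{lattice}, extracting a well-behaved normal finite index subgroup and applying the centralizer-based machinery of Section 2. Let $G$ be commensurable with a limit group $L$. Since finite index subgroups of limit groups are again limit groups, $G$ contains a finite index subgroup isomorphic to a limit group; passing to its normal core, one may assume that $G$ contains a normal finite index subgroup $H$ that is itself a limit group. The key ingredients are that $H$ is hereditarily conjugacy separable (from earlier work of the authors on limit groups), that $H$ is subgroup separable (Wilton), and that centralizers of non-trivial elements of a limit group are finitely generated free abelian. It follows that for every $1 \neq h \in H$, the centralizer $C_G(h)$ is finitely generated virtually abelian and in particular hereditarily conjugacy separable, while the equality $\overline{C_H(h)} = \widehat{C_H(h)} = C_{\widehat H}(h)$ is a consequence of the subgroup separability of $H$ together with Remark \ref{Minasyan}.

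For $a \in G$ of infinite order, some positive power $a^m$ lies in $H \setminus \{1\}$, and Proposition \ref{conjugacy distinguished} applies directly to show that $a$ is conjugacy distinguished in $G$.

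The remaining case is that $a \in G$ has finite order, which (by passing to a power) reduces to prime order $p$. Conjugation by $a$ then induces an order-$p$ automorphism $\sigma$ of $H$. By Theorem \ref{cohomology}, the fixed point subgroup $P = C_H(a)$ has finite cohomological dimension and finite $\mathbb{F}_p$-cohomology in each degree. Combined with the fact that limit groups are of type $\mathrm{FP}_\infty$ with abelian centralizers, this should force $P$ to be finitely generated and in fact itself a limit group (of smaller height than $H$). An induction on the height of the limit group then yields that $C_G(a)$, being commensurable with $P$, is conjugacy separable, and a second application of Proposition \ref{conjugacy distinguished} (replacing the power $a^m$ by $a$ itself, since $a$ lives in the hereditarily conjugacy separable finite extension $\langle H, a\rangle$) completes the torsion case.

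The main obstacle is precisely this last step: one must leverage the cohomological bound of Theorem \ref{cohomology} to extract enough structural information about the fixed point subgroup $C_H(a)$ of a limit group to guarantee finite generation and an inductive hypothesis on height. This is what makes the placement of Theorem \ref{cohomology} immediately before the final statement crucial; the delicate point is that $\mathrm{FP}_\infty$ and finite $\mathbb{F}_p$-cohomology alone do not obviously yield the required hereditarily conjugacy separable structure for the centralizers in $G$, and the argument has to be set up so that the induction closes against the hierarchy of extensions of centralizers built in the class $\mathcal{Y}_n$.
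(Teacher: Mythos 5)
Your treatment of the infinite-order case and your verification of the centralizer conditions for $1\neq h\in H$ match the paper, but the torsion case contains a genuine gap --- one you partly flag yourself. First, Proposition \ref{conjugacy distinguished} cannot be applied ``with $a^m$ replaced by $a$ itself'' when $a$ has finite order and $H$ is a (torsion-free) limit group: the only power of $a$ lying in $H$ is the identity, so the hypotheses of that proposition degenerate to requiring $C_G(1)=G$ to be conjugacy separable, which is circular. Second, your fallback --- that $C_G(a)$ is commensurable with the fixed-point subgroup $C_H(a)$, which ``should'' be a limit group of smaller height, closing an induction --- is not established: Theorem \ref{cohomology} gives only a bound on $\sum_j \dim H^j(P,\mathbb{F}_p)$ for $P=C_H(a)$, and nothing in the paper (or in your sketch) shows that $P$ has strictly smaller height, nor that the finite extension $C_G(a)$, which again contains torsion, is conjugacy separable. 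As written the induction does not close.

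The paper takes a different route for torsion, and the ingredient you are missing is structural: by Kochloukova's theorem some term $\gamma_i(L)$ of the lower central series of a limit group is free, so $L$ is finitely generated free-by-nilpotent, hence free-by-polycyclic. This is what makes Theorem \ref{free by polycyclic} applicable, and inside it the prime-order torsion case is carried entirely by Theorem \ref{order p} (the Dyer--Scott decomposition of the preimage of $\langle a_0\rangle$ and the permutational $\mathbb{F}_p[N]$-module argument), together with Corollary \ref{free-by-polycyclic-by-p} to make $\langle c,H\rangle$ hereditarily conjugacy separable before a final application of Proposition \ref{conjugacy distinguished} to the pair $(G,\langle c,H\rangle)$. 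In that framework Theorem \ref{cohomology} is used only for the much more modest purpose of verifying the hypothesis that $C_G(g)$ is finitely generated for torsion $g$: finiteness of $H^1(C_L(h),\mathbb{F}_p)$ plus the Bridson--Howie result gives finite generation of $C_L(h)$ for $h$ of prime order, and an induction on the order of $g$ (not on the height of $L$) finishes. Your proposal omits the free-by-polycyclic structure and Theorem \ref{order p} altogether, and these are exactly what carry the torsion case.
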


\begin{proof} Let $G$ be a group commensurable with a limit group $L$. We
have to show that every element $g$ of $G$ is conjugacy
distinguished. Replacing $L$ by a common subgroup of finite index
in $G$ and $L$ we may assume that $L$ is a subgroup of finite
index of $G$. Then replacing $L$ by its core in $G$ we may assume
that $L$ is normal in $G$. We shall show that a pair $(G,L)$
satisfy hypothesis of Theorem \ref{free by polycyclic}.

Subgroup separability of limit groups \cite{W-2006} implies that
$G$ is subgroup separable. Since a subgroup of finite index of a
limit group is a limit group the main result of \cite{CZ1} shows
that $L$ is hereditarily conjugacy separable.
 By Theorem 2
in \cite{desi} there is a term of lower central series
$\gamma_i(L)$ which is free, so $L$ is finitely generated
free-by-nilpotent and in particular free-by-polycyclic.
 By commutative transitivity property
$C_L(h)$ is abelian for every $h\in L$ and it is well-known that
an abelian subgroup of a limit group is finitely generated
(follows from finiteness of cohomological dimension for example).
Therefore $C_G(h)$ is finitely generated virtually abelian for
every $h\in L$ and hence is hereditarily conjugacy separable. We
are left with checking that $C_G(g)$ is finitely generated for
every $g\in G$. Clearly it is equivalent to $C_L(g)$ being
finitely generated.

If $g$ has infinite order then some of its power is in $L$ and so
(as was explained above) $C_G(g)$ is finitely generated virtually
abelian. Suppose now $g$ is of finite order and $h:=g^m$ is of
prime order $p$ for some natural $m$.

 Since  limit groups are of finite
cohomological dimension and is of type $FP_\infty$ that in turn
implies finiteness of cohomology with coefficients in a finite
field we can apply Theorem \ref{cohomology} to deduce that  $dim
H^1(C_L(h),\mathbb{F}_p)$ is finite. Since
$H^1(C_L(h),\mathbb{F}_p)=Hom(C_L(h),\mathbb{F}_p)\cong
C_L(h)/([C_L(h),C_L(h)]C_L(h)^p)\cong H_1(C_L(h),\mathbb{F}_p)$,
it follows from Theorem 2 together with the last line of the first
paragraph of Section 4 in \cite{BH} that $C_L(h)$ is finitely
generated. Since $g$ normalizes $C_L(h)$ we can apply induction on
the order of $g$ to deduce that $C_{C_L(h)}(\langle
g\rangle/\langle h\rangle)=C_L(g)$ is finitely generated, as
required.

Thus the pair $(G,L)$ satisfies the premises of Theorem \ref{free
by polycyclic} according to which  $G$ is conjugacy separable. The
proof is complete.\end{proof}

\bigskip
\noindent Department of Mathematics\\
 University of Bras\'{\i}lia\\
Bras\'{\i}lia-DF 70910-900\\
 Brazil\\
 sheila@mat.unb.br, pz@mat.unb.br

\end{document}